\begin{document}

\newtheorem{theorem}{Theorem}[section]
\newtheorem{corollary}[theorem]{Corollary}
\newtheorem{definition}[theorem]{Definition}
\newtheorem{proposition}[theorem]{Proposition}
\newtheorem{lemma}[theorem]{Lemma}
\newtheorem{example}[theorem]{Example}
\newtheorem{algorithm}[theorem]{Algorithm}
\newtheorem{conjecture}[theorem]{Conjecture}
\newenvironment{proof}{\noindent {\bf Proof.}}{\rule{3mm}{3mm}\par\medskip}
\newcommand{\remark}{\medskip\par\noindent {\bf Remark.~~}}
\title{Counting Collatz Numbers}
\author{Chunlei Liu\footnote{Shanghai Jiao Tong Univ., Shanghai 200240, clliu@sjtu.edu.cn.}}
\date{}
\maketitle
\thispagestyle{empty}

\abstract{The counting function $\pi(x)$ for the numbers satisfying the Collatz conjecture is studied. A related exponential congruence equation is investigated, yielding a method to construct its solutions from free tuples, and enabling us to obtain the inequality $$\pi(x)\geq x^{0.3227},\ x\rightarrow+\infty.$$
The historical record is 0.84.
}

\noindent {\bf Key words}:  Collatz conjecture, 3x+1 problem, exponential congruence equation, ordered partition.

\section{\small{INTRODUCTION}}
\hskip .2in
We begin with the following definitions.
\begin{definition}[Syracuse map] The Syracuse map $S$ is an operator on the set of odd numbers acting
according ro the formula
$$S(n)=(3n+1)2^{-{\rm ord }_2(3n+1)}.
$$
\end{definition}
The $l$-th iteration of $S$ is denoted as $S^l$.
\begin{definition}[Collatz number] An odd number $n$ is called a Collatz number if
$S^l(n)=1$ for some
$l$.
The level of a Collatz number is the smallest number $l$ such that
$S^l(n)=1$.
\end{definition}
We now recall the Collatz conjecture.
\begin{conjecture}[Collatz conjecture]Every odd number is a Collatz number.\end{conjecture}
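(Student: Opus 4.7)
The plan is to attempt a proof by strong induction on the odd number $n$. The base case $n=1$ is immediate. For $n>1$, I would try to exhibit some iterate $S^l(n)<n$; by the induction hypothesis this iterate is a Collatz number, and prepending the first $l$ steps of $n$'s trajectory finishes the argument. Thus the whole problem reduces to showing that every odd $n>1$ has a finite \emph{stopping time}, i.e.\ that the Syracuse orbit eventually descends below its starting point.

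To attack the stopping time, I would set $n_0=n$, $n_{i+1}=S(n_i)$, and $k_i={\rm ord}_2(3n_i+1)$, so that after $l$ steps
\[
 n_l \;=\; \frac{3^l n + c_l(k_1,\dots,k_l)}{2^{k_1+\cdots+k_l}},
\]
where $c_l$ is an explicit integer polynomial in the $k_i$. Descent below $n$ is equivalent to the inequality $2^{k_1+\cdots+k_l}>3^l+c_l/n$. Since heuristically each $k_i$ is geometrically distributed with mean $2$, one \emph{expects} $k_1+\cdots+k_l\approx 2l$ and hence an average contraction factor of $(3/4)^l$ per step. The technical program is to turn this heuristic into a rigorous deterministic inequality by studying the exponential congruence $3^l n + c_l\equiv 0\pmod{2^{k_1+\cdots+k_l}}$ and parametrizing its solutions by the free tuples that the abstract promises to construct.

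The hard part, which is in fact the entire Collatz difficulty, is ruling out the two failure modes that would block induction: (i) a divergent orbit in which the partial sums $k_1+\cdots+k_l$ stay anomalously small forever, and (ii) a nontrivial cycle $S^l(n)=n$, which by the closed form above forces the Diophantine equation $(2^{k_1+\cdots+k_l}-3^l)\,n = c_l$ to admit a solution with $n\neq 1$. For (ii) one would ideally import a Baker--Rhin-type lower bound on the linear form $|K\log 2 - l\log 3|$ to force $|2^K-3^l|$ to be too large compared with $c_l$ for positive integer $n>1$ to exist; for (i) a deterministic, not merely probabilistic, descent lemma must be produced that applies to \emph{every} odd residue class modulo a sufficiently large power of $2$. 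Candidly, I do not see how to achieve either within the framework sketched here: the exponential-congruence and free-tuple machinery appears calibrated to produce \emph{many} Collatz numbers, hence a density lower bound on $\pi(x)$, rather than to exclude the exceptional orbits required for the full conjecture. My expectation is therefore that this approach falls short of the stated conjecture and instead yields the quantitative bound advertised in the abstract.
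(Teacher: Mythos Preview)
The statement you were asked to prove is labeled in the paper as a \emph{Conjecture}, not a theorem, and the paper offers no proof of it whatsoever---nor does any known mathematical work, since this is precisely the Collatz conjecture, one of the most famous open problems in number theory. The paper's contribution is the density lower bound $\pi(x)\ge x^{0.3227}$ of Theorem~\ref{main}, not a resolution of the conjecture.

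Your proposal is therefore not being compared against any proof in the paper, because none exists. To your credit, you correctly diagnose the two genuine obstructions (divergent trajectories and nontrivial cycles) and honestly conclude that the sketched induction-on-stopping-time strategy cannot close either gap. That assessment is accurate: the exponential-congruence and free-tuple machinery in the paper is designed to \emph{construct} many Collatz numbers, yielding a lower bound on $\pi(x)$, and has no mechanism for excluding exceptional orbits. Your final sentence---that the approach yields the quantitative bound in the abstract rather than the full conjecture---is exactly right and matches what the paper actually does.
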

\begin{definition}[Collatz number counter]
Let
$$\pi(x)=\#\{n\leq x\mid n \text{ is  a Collatz number} \},\ x>0.$$
\end{definition}
In 1978, Crandall\cite{Cr78} proved that there is a positive constant $\beta>0$ such that
$$\pi(x)\geq x^{\beta},\ x\rightarrow+\infty.$$
In 1989, Krasilov \cite{Kr89} proved that
$$\pi(x)\geq x^{0.43},\ x\rightarrow+\infty.$$
In 1993, Wirsching \cite{Wi93} proved that
$$\pi(x)\geq x^{0.48},\ x\rightarrow+\infty.$$
In 1995, Applegae-Lagarias \cite{AL2} proved that
$$\pi(x)\geq x^{0.81},\ x\rightarrow+\infty.$$
In 2003, Krasilov-Lagarias \cite{KL03} proved that
$$\pi(x)\geq x^{0.84},\ x\rightarrow+\infty.$$
In 2022, Tao \cite{Tao22} studied the numbers whose orbit under the Syracuse attains almost bounded values.
In this paper, we shall prove the following result.
\begin{theorem}\label{main} There is a positive constant $c$ such that
$$\pi(x)\geq (1+o(1))x^{\frac13H_2(\frac{1}{2+\frac23\log_43})},\ x\rightarrow+\infty,$$
where  $H_2(p)=-p\log_2p-(1-p)\log_2(1-p)$.
In particular,
$$\pi(x)\geq x^{0.3227},\ x\rightarrow+\infty.$$\end{theorem}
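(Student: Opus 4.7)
My plan is to run the Syracuse map backward from $1$. Any trajectory $n_0 \to n_1 \to \cdots \to n_l = 1$ satisfies $3n_{i-1} + 1 = 2^{a_i}n_i$ with $a_i \geq 1$, and unfolding the recursion gives
$$3^l n_0 \;=\; 2^T \;-\; \sum_{i=0}^{l-1} 3^{\,l-1-i}\, 2^{\sigma_i}, \qquad \sigma_i = a_1+\cdots+a_i,\quad T = \sigma_l,$$
so each Collatz number of level $l$ corresponds uniquely to a tuple $(a_1,\ldots,a_l) \in \mathbb{Z}_{\geq 1}^l$ solving the exponential congruence $2^T \equiv \sum_{i=0}^{l-1} 3^{\,l-1-i}\, 2^{\sigma_i} \pmod{3^l}$ such that the resulting $n_0$ is a positive odd integer. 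Since forward Syracuse iteration recovers the $a_i$ from $n_0$, the tuple-to-$n_0$ map is injective, and $\pi(x)$ is bounded below by the number of admissible tuples whose $n_0 \leq x$.

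To produce such tuples in quantity, I would restrict each $a_i$ to a small alphabet (e.g.\ $\{1,2\}$ or $\{2,3\}$) and analyse the congruence modulo successive powers of $3$. The key observation is that the mod-$3$ constraint at level $j$ involves only the parity of a partial sum of entries, so by sacrificing roughly one "check position" out of every three, the remaining positions may be chosen arbitrarily without violating the congruence. This furnishes the family of \emph{free tuples} promised by the abstract; their cardinality is of order $\binom{N}{pN} \sim 2^{N H_2(p)}$ with $N \asymp l$ and density $p$ to be optimised. Since $n_0 \leq 2^T / 3^l$ and the chosen alphabet makes $T$ a linear function of $l$ and $p$, $\log_2 x$ is itself linear in $l$, and the resulting lower bound takes the form $\pi(x) \geq x^{H_2(p)/g(p)}$, where $g(p)$ encodes both the size bound and the factor $3$ lost to the congruence. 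Optimising over $p$ produces the critical density $p^\star = (2+\tfrac{2}{3}\log_4 3)^{-1}$ and the exponent $\tfrac{1}{3}H_2(p^\star)$, whose numerical value exceeds $0.3227$.

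The central obstacle is the construction of free tuples. The congruence $2^T \equiv \sum 3^{\,l-1-i} 2^{\sigma_i} \pmod{3^l}$ is nonlinear in $(a_1,\ldots,a_l)$, since the partial sums $\sigma_i$ appear in exponents, so its decoupling into $l$ essentially-local mod-$3$ relations will require a careful induction using the multiplicative structure of $2^a \pmod{3^k}$ and, probably, a reparametrisation of the tuple by its binary profile viewed as an ordered partition of $T$. Once that combinatorial core is in place, positivity $n_0 \geq 1$ holds automatically whenever the average entry exceeds $\log_2 3$, the odd-integer condition is local and absorbed into the alphabet choice, and distinctness is free from the injective encoding; only then does the entropy calculation close the argument.
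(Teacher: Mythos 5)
Your opening step---unfolding backward trajectories from $1$ into the exponential congruence $2^{T}\equiv\sum_{i}3^{\,l-1-i}2^{\sigma_i}\pmod{3^l}$ and noting that the tuple-to-$n_0$ map is injective---is exactly the paper's Section 2 and is sound. The gap is precisely in the part you yourself call ``the central obstacle,'' and the mechanism you sketch for it is not just unproven but provably wrong. The congruence does not impose ``one check position out of every three''; it imposes one condition per position. Peeling the congruence one ternary digit at a time (equivalently, running the trajectory backward, $n_{i-1}=(2^{a_i}n_i-1)/3$) shows that for \emph{every} index the parity of $a_i$ is forced by the later entries (by $n_i \bmod 3$), and one further residue class of $a_i$ modulo $6$ must be excluded if the backward iteration is to continue past $n_{i-1}$ (i.e.\ if $n_{i-1}$ is to be prime to $3$). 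Consequently, with an alphabet containing one even and one odd letter, such as $\{1,2\}$ or $\{2,3\}$, there is \emph{exactly one} admissible tuple of each length $l$, not $2^{2l/3}$ of them. Nor can the scheme be repaired by letting check positions absorb the constraints: the congruence modulo $3^l$ carries about $l\log_2 3$ bits of constraint, while $l/3$ binary check positions carry $l/3$ bits; moreover the constraint on position $j$ is a condition on $a_j$ itself (given the other entries), so a violated condition at a ``free'' position means the intermediate iterate is not an integer, and no choice at a different position can undo that. (With entries in $\{1,2\}$ the size bound also degenerates, since $2^T\le 4^l$ forces $n_0\le(4/3)^l$.)

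The paper's actual construction avoids this by making every position \emph{semi}-free rather than making some positions fully free: for each tuple $(u_1,\dots,u_l)$ of integers, there is a solution $(v_1,\dots,v_l)$ of the congruence with $[\frac{v_j+1}{6}]=u_j-1$, i.e.\ each $v_j$ may be placed in an arbitrary prescribed window of six consecutive integers, inside which the $j$-th ternary-digit condition (forced parity, plus one excluded class mod $6$) singles out an admissible value. The free data are the window indices $u_j$, counted by ordered partitions: $\omega(y,l)\ge\binom{n}{l}$ tuples with $\sum_j u_j\le y$, where the bound $B(v_1,\dots,v_l)\le 3^{-l}2^{v_1+\cdots+v_l}\le x$ translates into $y=\frac13\log_4x+l(1+\frac13\log_43)$. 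The factor $\tfrac13$ in the exponent then falls out of the optimization over the number of steps: at $l=[\log_4x/(3+\log_43)]$ one gets $n=\frac13\log_2x$ and $l/n=(2+\frac23\log_43)^{-1}$, which is where $p^\star$ comes from---not from sacrificing one position in three. So your entropy endpoint $\binom{N}{pN}\sim 2^{NH_2(p)}$ and your value of $p^\star$ match the theorem only because they are read off from it; the combinatorial engine that would justify them is absent from your argument, and the one you propose cannot produce an exponential family of solutions.
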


In the next section, we shall investigate an exponential congruence equation, and  give a method to construct its solutions from free tuples. In the section after the next, we shall relate Collatz numbers to the mentioned congruence equation. In the final section we shall give a proof of the main result of this paper.
\section{Entering Congruence Equations}
In this section we relate Collatz numbers to a congruence equation.
\begin{definition}[$(2,3)$-primary Congruence Equation]
Let $x_1,\cdots,x_l$ be $l$ variables taking values in numbers. Then the congruence equation
$$
\left\{
  \begin{array}{ll}
    2^{x_1+\cdots+x_l}\equiv \sum_{j=0}^{l-1}3^j2^{x_{j+2}+\cdots+x_l}({\rm mod} 3^l), & \hbox{} \\
    2^{x_1+\cdots+x_l}\not\equiv \sum_{j=0}^{l-1}3^j2^{x_{j+2}+\cdots+x_l}({\rm mod} 3^{l+1}). & \hbox{}
  \end{array}
\right.$$
is called the $(2,3)$-primary congruence equation of level $l$.
\end{definition}
\begin{definition}
Let $(v_1,\cdots,v_l)$ be a  solution of the $(2,3)$-primary congruence equation of level $l$.
We write
$$B(v_1,\cdots,v_l))
=3^{-l}(2^{v_1+\cdots+v_l}-\sum_{j=0}^{l-1}3^j2^{v_{j+2}+\cdots+v_l}).$$
\end{definition}
\begin{lemma}Let $n>1$ be a Collatz number prime to $3$ of level $l$, and
$$v_{m}={\rm ord}_2(3S^{l-m}(n)+1),\ m=1,\cdots,l.$$
Then $v_1>2$, and
$$n=3^{-l}(2^{v_1+\cdots+v_l}-\sum_{j=0}^{l-1}3^j2^{v_{j+2}+\cdots+v_l}).$$ In particular, $(v_1,v_2,\cdots,v_l)$ is a  solution of the $(2,3)$-primary congruence equation of level $l$ such that
$$B(v_1,\cdots,v_l))=n.$$
\end{lemma}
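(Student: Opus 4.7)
The plan is to unfold the Syracuse recursion iteratively and then just match indices against the statement. Set $n_k = S^k(n)$, so $n_0 = n$ and $n_l = 1$, and set $w_k = \mathrm{ord}_2(3n_{k-1}+1)$. By definition of the Syracuse map one has the fundamental identity
\[
3n_{k-1}+1 = 2^{w_k}\,n_k, \qquad k=1,\dots,l,
\]
and the index translation $v_m = w_{l-m+1}$, so that $v_1+\cdots+v_l = w_1+\cdots+w_l$ and $v_{j+2}+\cdots+v_l = w_1+\cdots+w_{l-j-1}$.

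Next I would prove by induction on $k$ the closed form
\[
3^k n \;=\; 2^{w_1+\cdots+w_k}\,n_k \;-\; \sum_{j=0}^{k-1} 3^{j}\,2^{w_1+\cdots+w_{k-1-j}}.
\]
The base case $k=1$ is just $3n+1 = 2^{w_1}n_1$. For the inductive step, multiply both sides by $3$ and substitute $3n_k = 2^{w_{k+1}}n_{k+1}-1$ into $3\cdot 2^{w_1+\cdots+w_k}n_k$; the extra $-2^{w_1+\cdots+w_k}$ that pops out is precisely the new $j=k$ term in the sum. Setting $k=l$, using $n_l=1$, and then reindexing via $v_m = w_{l-m+1}$ yields exactly
\[
3^l n \;=\; 2^{v_1+\cdots+v_l} \;-\; \sum_{j=0}^{l-1} 3^{j}\,2^{v_{j+2}+\cdots+v_l},
\]
which is the asserted formula for $n$ and, equivalently, the statement $B(v_1,\dots,v_l)=n$.

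To finish I would verify the two congruence conditions. The identity above expresses the right-hand side as $3^l n$, an integer multiple of $3^l$, which is the first (mod $3^l$) congruence of the $(2,3)$-primary equation. The second condition (nonvanishing mod $3^{l+1}$) is equivalent to $3\nmid n$, which holds by hypothesis that $n$ is prime to $3$. Finally, for $v_1>2$: since $v_1 = w_l$ and $2^{v_1} = 3n_{l-1}+1$, the value $v_1=2$ would force $n_{l-1}=1$, contradicting the minimality of $l$ as the level of $n>1$; the value $v_1=1$ would make $3n_{l-1}+1=2$, impossible for a positive odd $n_{l-1}$. Hence $v_1\ge 3$.

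The proof is essentially routine book-keeping, and the only mild obstacle is making sure the reindexing $v_m = w_{l-m+1}$ is executed cleanly so that the telescoped sum $\sum 3^j 2^{w_1+\cdots+w_{l-1-j}}$ aligns with the sum $\sum 3^j 2^{v_{j+2}+\cdots+v_l}$ in the statement; once that combinatorial bookkeeping is done, the congruence, non-congruence, and the lower bound on $v_1$ all drop out immediately.
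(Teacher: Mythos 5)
Your proposal is correct and is essentially the paper's own proof: both arguments unfold the Syracuse identity $3S^{k}(n)+1=2^{w_{k+1}}S^{k+1}(n)$ inductively into the same telescoped closed form, and both rule out $v_1\le 2$ by observing that $2^{v_1}=3S^{l-1}(n)+1$ with $S^{l-1}(n)>1$. The only differences are organizational: the paper inducts on the level $l$, applying the lemma itself to $S(n)$ (so the formula and the bound $v_1>2$ are carried through the induction together), whereas you fix $n$ and prove the forward-unrolled identity by induction on the iteration count $k$ before setting $k=l$ and reindexing; your explicit verification of the two congruence conditions (the difference equals $3^l n$ with $3\nmid n$) simply spells out a step the paper leaves implicit.
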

\begin{proof}
Firstly, if $l=1$, then
$$v_{1}={\rm ord}_2(3n+1).$$
Thus
$$1= S(n)=(3n+1)2^{-v_1}.$$
Hence $v_1>2$, and
$$n=3^{-1}(2^{v_1}-1).$$
We now assume that $l>1$.
Then $$v_{m}={\rm ord}_2(3S^{l-1-m}(S(n))+1),\ m=1,\cdots,l-1.$$
By induction, $v_1>2$, and
$$S(n)=3^{-l+1}(2^{v_1+\cdots+v_{l-1}}-\sum_{j=0}^{l-2}3^j2^{v_{j+2}+\cdots+v_{l-1}}).$$
Hence
$$n=\frac{2^{v_l}S(n)-1}{3}=3^{-l}(2^{v_1+\cdots+v_l}-\sum_{j=0}^{l-1}3^j2^{v_{j+2}+\cdots+v_l}).$$
The lemma is proved.\end{proof}
\begin{lemma}
If $(v_1,v_2,\cdots,v_l)$ with $v_1>2$ is a  solution of the $(2,3)$-primary congruence equation of level $l$, and
$$n=B(v_1,\cdots,v_l)),$$
then
$n$
is a Collatz number prime to $3$  of level $l$, and
$$v_{l-m}={\rm ord}_2(3S^m(n)+1),\ m=0,\cdots,l-1.$$
\end{lemma}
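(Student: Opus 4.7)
My plan is to mirror the recursive structure of the preceding lemma and proceed by induction on the level $l$.

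For the base case $l=1$, the claim reduces to a direct check: we have $n = B(v_1) = (2^{v_1}-1)/3$; the two level-$1$ conditions $2^{v_1} \equiv 1 \pmod{3}$ and $2^{v_1} \not\equiv 1 \pmod{9}$ make $n$ a positive integer prime to $3$, and $v_1 > 2$ together with the oddness of $2^{v_1}-1$ ensures $n$ is odd and $>1$. Then $3n+1 = 2^{v_1}$ gives ${\rm ord}_2(3n+1) = v_1$ and $S(n) = 1$, so $n$ is Collatz of level $1$ with the asserted index formula.

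For the inductive step, I would set $N := 3^l n$ and
$$M := 2^{v_1+\cdots+v_{l-1}} - \sum_{j=0}^{l-2} 3^j\, 2^{v_{j+2}+\cdots+v_{l-1}},$$
and first establish the key algebraic identity
$$N + 3^{l-1} = 2^{v_l}\, M,$$
by isolating the $j=l-1$ summand of $N$ (which equals $3^{l-1}$) and factoring $2^{v_l}$ out of the remaining terms. The two level-$l$ conditions say ${\rm ord}_3(N) = l$ exactly. Since ${\rm ord}_3(N) > l-1 = {\rm ord}_3(3^{l-1})$, the identity forces ${\rm ord}_3(2^{v_l} M) = l-1$, hence ${\rm ord}_3(M) = l-1$; this is precisely the statement that $(v_1,\ldots,v_{l-1})$ solves the level-$(l-1)$ congruence equation. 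A short parity check (the only odd summand of $M$ is $3^{l-2}$, coming from $j=l-2$) shows $M$ is odd, so $B(v_1,\ldots,v_{l-1}) = M/3^{l-1}$ is an odd positive integer.

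Substituting into $3n+1 = N/3^{l-1} + 1 = (N + 3^{l-1})/3^{l-1}$ yields
$$3n+1 = 2^{v_l}\cdot B(v_1,\ldots,v_{l-1}),$$
so ${\rm ord}_2(3n+1) = v_l$ and $S(n) = B(v_1,\ldots,v_{l-1})$. Invoking the induction hypothesis on $(v_1,\ldots,v_{l-1})$, $S(n)$ is a Collatz number prime to $3$ of level $l-1$ with $v_{(l-1)-m} = {\rm ord}_2(3 S^{m+1}(n)+1)$ for $m=0,\ldots,l-2$; re-indexing and combining with the identity $v_l = {\rm ord}_2(3n+1)$ just proved gives the required formula for all $m = 0, \ldots, l-1$. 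Since $S^l(n) = S^{l-1}(S(n)) = 1$, $n$ is Collatz of level at most $l$, and strictness follows because any shorter orbit for $n$ would contradict the inductive conclusion that $S(n)$ has level exactly $l-1$. Primality of $n$ to $3$ again follows from ${\rm ord}_3(N) = l$ being sharp. The main technical obstacle is the algebraic identity $N + 3^{l-1} = 2^{v_l} M$ and the attendant $3$-adic bookkeeping showing that both the congruence and the sharp non-congruence at level $l-1$ are inherited; once this bridge between levels $l$ and $l-1$ is in place, the rest is a clean induction along the Syracuse recursion.
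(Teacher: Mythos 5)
Your proof is correct and takes essentially the same route as the paper's: induction on $l$ along the Syracuse recursion, peeling off $v_l$ to get ${\rm ord}_2(3n+1)=v_l$ and $S(n)=B(v_1,\ldots,v_{l-1})$, then applying the inductive hypothesis to $(v_1,\ldots,v_{l-1})$ and re-indexing. The only difference is that you spell out, via the identity $N+3^{l-1}=2^{v_l}M$, the $3$-adic valuation argument, and the parity check on $M$, the steps the paper compresses into ``it is easy to see.''
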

By this lemma, different tuples give rise to different Collatz numbers.
\begin{proof}
Firstly, if $l=1$, then
$$1\neq n=3^{-1}(2^{v_1}-1)\not\equiv0({\rm mod} 3).$$
Thus
$$v_{1}={\rm ord}_2(3n+1),$$
and
$$ S(n)=(3n+1)2^{-v_1}=1.$$
Therefore the assertion in the lemma is true when $l=1$.
We now assume that $l>1$.
It is easy to see that
$${\rm ord}_2(3n+1)=v_l,$$
and
$$S(n)=(3n+1)2^{-v_l}=3^{-l+1}(2^{v_1+\cdots+v_{l-1}}-\sum_{j=0}^{l-1}3^j2^{v_{j+2}
\cdots+v_{l-1}}). $$
In particular, $(v_1,v_2,\cdots,v_{l-1})$ is a  solution of the $(2,3)$-primary congruence equation of level $l$.
By induction, $S(n)$
is a Collatz number  prime to $3$  of level $(l-1)$, and
$$v_{l-1-m}={\rm ord}_2(3S^{m}(S(n))+1),\ m=0,\cdots,l-2.$$
Therefore, $n$
is a Collatz number prime to $3$   of level $l$, and
$$v_{l-m}={\rm ord}_2(3S^m(n)+1),\ m=0,\cdots,l-1.$$
The lemma is proved.
\end{proof}
\section{Solving Congruence Equations}
In this section we try to solve the congruence equation introduced in the last section.
We give a method to construct its solutions from almost free tuples.
\begin{lemma}
Let $(u_1,\cdots,u_l)$ be an $l$-tuple of numbers prime to $3$. Then there is  a unique solution $(v_1,\cdots,v_l)$ of the $(2,3)$-primary congruence equation of level $l$ such that
$$[\frac{v_j+1}{6}]=u_j-1,\ j=1,\cdots,l.$$
\end{lemma}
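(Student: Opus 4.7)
I would prove this by induction on $l$, making essential use of the Syracuse-style recursion implicit in $B$. Setting $m_k = B(v_1,\ldots,v_k)$ for $k=0,1,\ldots,l$ (with $m_0 = 1$), a direct manipulation of the definition of $B$ yields
\[
m_k = \frac{2^{v_k} m_{k-1} - 1}{3},
\]
so the level-$l$ congruence equation is equivalent to the simultaneous assertions that each $m_k$ is a positive integer and that $m_l$ is coprime to $3$. This reformulation is the conceptual backbone: it reduces one equation modulo $3^l$ (plus one non-congruence modulo $3^{l+1}$) to $l$ one-step conditions, each involving only one new variable $v_k$.

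For the base case $l=1$, the conditions reduce to $2^{v_1} \equiv 1 \pmod 3$ and $2^{v_1} \not\equiv 1 \pmod 9$. Since $2$ has order $6$ modulo $9$, these force $v_1 \bmod 6 \in \{2,4\}$. The window $\{6u_1-7,\ldots,6u_1-2\}$ contains exactly one representative of each residue class modulo $6$, so I would use the prime-to-$3$ hypothesis on $u_1$ to pick out the unique admissible candidate from the pair $\{6u_1-4,\,6u_1-2\}$.

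For the inductive step, I would apply the lemma at level $l-1$ to $(u_1,\ldots,u_{l-1})$ to obtain a unique $(v_1,\ldots,v_{l-1})$ and hence a specific integer $m_{l-1}$ coprime to $3$. Then $v_l$ is to be chosen in $[6u_l-7,\,6u_l-2]$ so that $2^{v_l} m_{l-1} \equiv 1 \pmod 3$ and $\not\equiv 1 \pmod 9$. The first congruence (using $\gcd(m_{l-1},3)=1$) fixes the parity of $v_l$; the mod-$9$ condition, combined with the residue $m_{l-1} \bmod 9$, then restricts $v_l \bmod 6$ to one of two specific residues, and the prime-to-$3$ hypothesis on $u_l$ performs the final disambiguation exactly as in the base case.

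The main obstacle will be the disambiguation step: verifying that for each $j$, the residue $u_j \bmod 3$ canonically matches one of the two residues of $v_j \bmod 6$ compatible with $m_{j-1} \bmod 9$. Once this matching is pinned down in the base case, the inductive propagation should be essentially mechanical, since the recursion leaves the same two-fold ambiguity modulo $6$ at every step, always broken by the $u_j$ hypothesis in the same way.
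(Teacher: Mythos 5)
Your reformulation via $m_k = B(v_1,\ldots,v_k)$ and the recursion $m_k = (2^{v_k}m_{k-1}-1)/3$ is exactly the skeleton of the paper's own proof (the paper calls your $m_{l-1}$ there $a$ and your $m_l$ there $b$), and your reduction of the level-$l$ equation to $l$ one-step conditions, each a constraint on the single new variable $v_k$ modulo $6$ inside the window $6u_k-7 \leq v_k \leq 6u_k-2$, is correct and matches the paper's induction. The existence half of your argument goes through: at each step the mod-$3$ condition fixes the parity of $v_k$, and since $2$ is a primitive root modulo $9$, the mod-$9$ condition excludes exactly one of the three residues of the correct parity, so the window always contains an admissible $v_k$.

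However, the disambiguation step you defer --- matching $u_j \bmod 3$ to one of the two surviving residues of $v_j \bmod 6$ --- cannot be carried out, because the two candidates are genuinely indistinguishable by the conditions of the lemma. The set being described is $\{(v_1,\ldots,v_l)$ solving the level-$l$ equation with $[\frac{v_j+1}{6}]=u_j-1$ for all $j\}$, and the hypothesis that $u_j$ is prime to $3$ appears nowhere in these defining conditions; both candidates lie in the window and both satisfy the congruence and the non-congruence, whatever $u_j$ is. Concretely, for $l=1$ and $u_1=1$: both $v_1=2$ and $v_1=4$ satisfy $2^{v_1}\equiv 1 \pmod 3$, $2^{v_1}\not\equiv 1 \pmod 9$, and $[\frac{v_1+1}{6}]=0=u_1-1$. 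So uniqueness fails, and in fact the solution set has exactly $2^l$ elements (two independent choices at each level). You should know that this is a defect of the lemma itself, not only of your attempt: the paper's proof asserts in the base case that $v_1=6u_1-4$ is the unique solution (it is not; $6u_1-2$ also works), and in the inductive step it forces a choice by decreeing $w_l$ to be the \emph{largest} admissible ternary digit --- a tie-break by fiat, not a proof of uniqueness; the prime-to-$3$ hypothesis on the $u_j$ is never used there either. What is actually true, and what the rest of the paper needs, is existence together with some fixed choice function $u \mapsto v$: since $u_j$ is recovered from $v_j$ by $u_j=[\frac{v_j+1}{6}]+1$, any such choice is automatically injective, which is all the Partition Bound lemma uses. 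So the correct repair is to drop the uniqueness claim (or restate it as uniqueness of the canonically chosen solution), not to search for a tie-breaking role for $u_j \bmod 3$ --- there is none.
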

\begin{proof}
Firstly, if $l=1$, then it is easy to see that $v_1=6u_1-4$ is  the unique solution of the $(2,3)$-primary level-$1$ congruence equation
$$2^{x_1}\equiv 1 ({\rm mod} 3),\text{ and }2^{x_1}\not\equiv 1 ({\rm mod} 3^2)$$
 such that
$$[\frac{v_1+1}{6}]=u_1-1.$$
Secondly, we assume that $l>1$ and $(v_1,\cdots,v_{l-1})$ is the unique solution of the $(2,3)$-primary level-$(l-1)$ congruence equation
$$ 2^{x_1+\cdots+x_{l-1}}\equiv\sum_{j=0}^{l-2}3^j2^{x_{j+2}+\cdots+x_{l-1}}({\rm mod} 3^{l-1})$$
 such that
$$[\frac{v_j+1}{6}]=u_j-1,\ j=1,\cdots,l-1.$$
Then $$2^{v_1+\cdots+v_{l-1}}\equiv\sum_{j=0}^{l-2}3^j2^{v_{j+2}+\cdots+v_{l-1}}({\rm mod} 3^{l-1}).$$
Let
$$a=3^{1-l}(2^{v_1+\cdots+v_{l-1}}-\sum_{j=0}^{l-2}3^j2^{v_{j+2}+\cdots+v_{l-1}}).$$
Then $a\neq0({\rm mod} 3)$. Let $r_l$ be the unique binary digit such that
$$2^{r_l}\equiv a({\rm mod} 3),$$
and let
$w_l$ be the largest ternary digit such that
$$4^{w_l}2^{r_l}a\not\equiv1({\rm mod} 3).$$
Let
$$v_l=6u_1-2w_l-r_l.$$
Then
$$2^{v_l}a\equiv4^{u_l}\equiv1({\rm mod} 3),\text{ and }2^{v_1}a\not\equiv 1 ({\rm mod} 3^2),$$
and $$[\frac{v_j+1}{6}]=u_j-1,\ j=1,\cdots,l.$$
Let
$$b=\frac{2^{v_l}a-1}{3}.$$
Then
$$2^{v_1+\cdots+v_{l}}-\sum_{j=0}^{l-1}3^j2^{v_{j+2}+\cdots+v_{l}}=3^lb
\equiv0({\rm mod} 3^{l}),$$
and
$$2^{v_1+\cdots+v_{l}}-\sum_{j=0}^{l-1}3^j2^{v_{j+2}+\cdots+v_{l}}=3^lb
\not\equiv0({\rm mod} 3^{l}),$$
That is, $(v_1,\cdots,v_l)$ a solution of the $(2,3)$-primary congruence equation of level $l$ such that
$$[\frac{v_j+1}{6}]=u_j-1,\ j=1,\cdots,l.$$
From the inductive construction process of $(v_1,\cdots,v_l)$, we see that $(v_1,\cdots,v_l)$ is uniquely determined by $(u_1,\cdots,u_l)$.
The lemma is proved.
\end{proof}
\section{Bounds}
In this section we prove the main theorem of this paper.
\begin{lemma}
Let $n,l\in\mathbb{N}$ with $\alpha n\leq l\leq n$, where $\alpha$ is a positive constant. Then
$$\log_2{n\choose l}=H_2(l/n)n+O(1),\ n\rightarrow+\infty.$$
\end{lemma}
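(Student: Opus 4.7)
The plan is a direct application of Stirling's formula. Writing
$$\log_2\binom{n}{l}=\log_2n!-\log_2l!-\log_2(n-l)!,$$
I would insert the expansion $\log_2m!=m\log_2m-m\log_2e+\tfrac{1}{2}\log_2(2\pi m)+O(1/m)$ into each of the three factorials and rearrange.

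First I would verify that the leading terms collapse to the entropy expression, via the algebraic identity
$$n\log_2 n-l\log_2 l-(n-l)\log_2(n-l)=-l\log_2(l/n)-(n-l)\log_2((n-l)/n)=nH_2(l/n),$$
while the linear contributions $-m\log_2 e$ cancel identically since $n=l+(n-l)$. The residual correction is $\tfrac{1}{2}\log_2(n/(l(n-l)))+O(1)$, which under the hypothesis $l\geq\alpha n$ reduces to $-\tfrac{1}{2}\log_2(n-l)+\tfrac{1}{2}\log_2 n+O(1)$, because $\log_2 l=\log_2 n+O(1)$.

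The main obstacle is controlling this residual at the upper boundary, where $n-l$ can be small and Stirling no longer directly applies to $(n-l)!$. For $l=n$ one has $\binom{n}{l}=1$ and $nH_2(1)=0$, so the claim holds trivially. For $1\leq n-l\ll n$, I would bypass Stirling on $(n-l)!$ altogether by writing $\binom{n}{l}=\binom{n}{n-l}$ and matching the elementary estimate $\log_2\binom{n}{k}=k\log_2(en/k)+O(\log k)$ at $k=n-l$ against the Taylor expansion $nH_2(1-k/n)=k\log_2(en/k)+O(k^2/n)$; the two agree within the stated error. Gluing the bulk regime (both $l$ and $n-l$ of order $n$) with this boundary regime then produces the uniform asymptotic across the whole range $\alpha n\leq l\leq n$.
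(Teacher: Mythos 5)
Your overall route---Stirling's formula applied to all three factorials, plus the entropy identity for the leading terms---is exactly what the paper's own one-line proof (``follows from \dots Stirling's formula'') intends, so there is no difference in approach; the problem is an algebra slip in your residual that inverts the geometry of the problem and hides the fact that the statement cannot be proved as written. From $\tfrac{1}{2}\log_2\bigl(n/(l(n-l))\bigr)+O(1)$ and $\log_2 l=\log_2 n+O(1)$, the residual is $-\tfrac{1}{2}\log_2(n-l)+O(1)$, \emph{not} $-\tfrac{1}{2}\log_2(n-l)+\tfrac{1}{2}\log_2 n+O(1)$: the single summand $\tfrac{1}{2}\log_2 n$ coming from the numerator is used up in cancelling $-\tfrac{1}{2}\log_2 l$ and cannot also survive as a separate term. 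Consequently the dangerous regime is not the boundary $n-l\ll n$ (there $-\tfrac{1}{2}\log_2(n-l)$ is small) but the bulk, which you declared safe. Concretely, for even $n$ and $l=n/2$ (admissible whenever $\alpha\le\tfrac12$, and the lemma is asserted for every positive constant $\alpha$), one has $\binom{n}{n/2}=(1+o(1))\,2^n\sqrt{2/(\pi n)}$, hence $\log_2\binom{n}{l}-nH_2(l/n)=-\tfrac{1}{2}\log_2 n+O(1)\to-\infty$. So no gluing of regimes can yield the claimed $O(1)$: the lemma as stated is false, and the counterexample sits precisely at the term your slip erased. (Your boundary matching has the same issue in milder form: the elementary estimate carries an honest $-\tfrac{1}{2}\log_2(2\pi k)$ correction at $k=n-l$, which is $O(1)$ only when $n-l=O(1)$.)

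What your method does prove, once the residual is kept correctly, is the uniform statement $\log_2\binom{n}{l}=nH_2(l/n)-\tfrac{1}{2}\log_2(n-l)+O(1)$ for $\alpha n\le l\le n-1$ (your separate treatment of $l=n$ and of bounded $n-l$, where Stirling on $(n-l)!$ degenerates, closes the endpoints), and in particular $\log_2\binom{n}{l}=nH_2(l/n)+O(\log n)$. This weaker form is both true and sufficient for the paper: in the proof of Theorem 1.1 one has $n\asymp\log_4 x$, so the discrepancy costs only a factor $2^{O(\log\log x)}=(\log x)^{O(1)}=x^{o(1)}$, which does not disturb the exponent $0.3227$ (it does, however, affect the literal $(1+o(1))$ multiplicative constant claimed in the theorem). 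The constructive fix is therefore to weaken the lemma's error term to $O(\log n)$, or to state the exact correction $-\tfrac{1}{2}\log_2(n-l)$, rather than to try to rescue $O(1)$.
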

\begin{proof}
This follows from the last lemma and Stirling's formula.\end{proof}
\begin{definition}[Fixed level Collatz number counter]
For $l\in\mathbb{N}$, let
$$\pi(x,l)=\#\{n\leq x\mid n \text{ is  a Collatz number of level } l\},\ x>0.$$
\end{definition}
\begin{definition}[Ordered partition counter]
For $l\in\mathbb{N}, y>0$, let
$$\omega(y,l)=\#\{(u_1,\cdots,u_l)\in(\mathbb N+1)^l\mid \sum_{j=1}^lu_j\leq y \}.$$
\end{definition}
\begin{lemma}[Partition Bound] Let $x>0$. Then
$$\pi(x,l)\geq\omega(\frac13\log_4x+l(1+\frac13\log_43),l).$$
\end{lemma}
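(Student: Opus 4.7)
The plan is to combine the parameterization of the previous section with the Collatz number construction of Section~2, producing for each admissible $l$-tuple $(u_1,\ldots,u_l)$ a distinct Collatz number of level $l$; the partition bound then reduces to a size estimate showing that the resulting $n$ lies in $[1,x]$ whenever $\sum u_j$ is at most $Y:=\tfrac{1}{3}\log_4 x+l\bigl(1+\tfrac{1}{3}\log_4 3\bigr)$.

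First, given $(u_1,\ldots,u_l)\in(\mathbb{N}+1)^l$, I would invoke the previous lemma to produce the unique solution $(v_1,\ldots,v_l)$ of the level-$l$ $(2,3)$-primary congruence equation with $[\tfrac{v_j+1}{6}]=u_j-1$. For $u_1\geq 2$ one automatically has $v_1>2$ (boundary tuples with $u_1=1$ can be absorbed into an $o(1)$-type correction), so the converse lemma of Section~2 applies and yields a Collatz number $n=B(v_1,\ldots,v_l)$ of level $l$, with distinct tuples giving distinct Collatz numbers as noted there.

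Second, I would establish the size control. From
$$n=3^{-l}\Bigl(2^{v_1+\cdots+v_l}-\sum_{j=0}^{l-1}3^{j}\,2^{v_{j+2}+\cdots+v_l}\Bigr)$$
and non-negativity of the subtractive sum, $n\leq 3^{-l}\cdot 2^{v_1+\cdots+v_l}$, hence $\log_2 n\leq\sum v_j-l\log_2 3$. Using the termwise bound on $v_j$ coming from the floor relation $[\tfrac{v_j+1}{6}]=u_j-1$ together with the specific value of $v_j$ produced by the Section~3 construction, one obtains $\sum v_j\leq 6\sum u_j-Cl$ for an appropriate constant $C$, and the hypothesis $\sum u_j\leq Y$ then reduces to $\log_2 n\leq\log_2 x$ after converting $\log_4$ into $\log_2$ and collecting terms. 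Thus $n\leq x$ as required, and since the map $(u_1,\ldots,u_l)\mapsto n$ is injective, $\pi(x,l)\geq\omega(Y,l)$ follows.

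The main obstacle will be the constant-tracking in this last step: the floor relation $[\tfrac{v_j+1}{6}]=u_j-1$ only determines $v_j$ within a window of six consecutive integers, and the precise effective value depends on the residues $r_l$ and $w_l$ chosen in the Section~3 construction. A careful reading of that construction is needed to align the cumulative contribution of the residues with the additive $l\,(1+\tfrac{1}{3}\log_4 3)$ term appearing in $Y$, so that the worst-case size of $n$ indeed stays below $x$ on the entire range $\sum u_j\leq Y$; any small additive losses incurred here will be harmless in the subsequent asymptotic optimization that yields the main theorem via binomial estimates.
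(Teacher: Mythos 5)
Your proposal follows the paper's proof essentially step for step: each tuple counted by $\omega$ is fed into the Section 3 lemma to produce the solution $(v_1,\ldots,v_l)$ with $[\frac{v_j+1}{6}]=u_j-1$, the converse lemma of Section 2 turns $B(v_1,\ldots,v_l)$ into a level-$l$ Collatz number (using $u_1\geq 2$ to get $v_1>2$), the size is controlled via $n\leq 3^{-l}2^{v_1+\cdots+v_l}$, and injectivity of the tuple-to-number map gives the count. The constant-tracking obstacle you flag is genuine, and in fact it is exactly the point the paper itself glosses over: its middle inequality $3^{-l}2^{v_1+\cdots+v_l}\leq(192)^{-l}4^{3(u_1+\cdots+u_l)}$ requires $v_j\leq 6u_j-6$, whereas the floor relation only guarantees $v_j\leq 6u_j-2$ (and the Section 3 construction even produces $v_1=6u_1-4$ in its base case), so your cautious treatment of this step is no weaker than the paper's own argument.
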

\begin{proof}
In fact, let $(u_1,\cdots,u_l)$  be an $l$-tuple of numbers with $u_1>1$ such that  $$\sum_{j=1}^lu_j\leq \frac13\log_4x+l(1+\frac13\log_43).$$ That is, $(u_1,\cdots,u_l)$  is an ordered $l$-partition of a number   $\leq \frac13\log_4x+l(1+\frac13\log_43).$
 Let $(v_1,\cdots,v_l)$ be the unique solution of the $(2,3)$-primary congruence equation of level $l$
such that
$$[\frac{v_j+1}{6}]=u_j-1,\ j=1,\cdots,l.$$
Then  $B(v_1,\cdots,v_l)$ is a Collatz number of level $l$.
It is easy to see that
$$B(v_1,\cdots,v_l)\leq 3^{-l}2^{v_1+\cdots+v_l}\leq (192)^{-l}4^{3(u_1+\cdots+u_l)}\leq x.$$
It is also easy to see that different ordered partitions give rise to different Collatz numbers. It follows that
$$\pi(x,l)\geq\omega(\frac13\log_4x+l(1+\frac13\log_43),l).$$
The lemma is proved.\end{proof}

{\it Proof of Theorem \ref{main}.}
Let
$l=[\frac{\log_4x}{3+1\log_43}]$, and $n=[\frac13\log_4x+l(1+\frac13\log_43)]$.
Then
\begin{eqnarray*}
      \pi(x) &\geq& \pi(x,l). \\
       &\geq&\omega(\frac13\log_4x,l)\\
&\geq&{n\choose l}\\
&\geq& (1+o(1))x^{\frac13H_2(\frac{1}{2+\frac23\log_43})}.
    \end{eqnarray*}
This completes the proof of Theorem \ref{main}.

\end{document}